\tikzset{->-/.style={decoration={markings,mark=at position #1 with {\arrow{>}}},postaction={decorate}}}
\definecolor{red}{rgb}{1,0,0} 
\definecolor{darkgreen}{rgb}{0, .7, 0}
\definecolor{darkblue}{rgb}{0, 0, 0.7}
\definecolor{darkred}{rgb}{0.7, 0, 0}
\definecolor{purple}{rgb}{.7, 0, 1}
\newcommand{\Z}{{\mathbb{Z}}}
\newcommand{\R}{{\mathbb{R}}}
\newcommand{\iso}{\cong}
\newcommand{\bdry}{\partial}
\newcommand{\GL}{{\mathrm{GL}}}  
\newcommand{\SL}{{\mathrm{SL}}}  
\newcommand{\Out}{{\mathrm{Out}}}  
\newcommand{\Mod}{{\mathrm{Mod}}} 
\newcommand{\CVn}{CV_n}
\newcommand{\CVns} {CV_{n,s}}
\newcommand{\BVn}{\mathcal J_n} 
\newcommand{\BVns}{\mathcal J_{n,s}}
\newcommand{\SC}{\mathscr S} 
\newcommand{\ssS}{\mathcal S}
\newcommand{\ssT}{\mathcal T}
\newcommand{\sph}{\mathfrak s}
\newcommand{\fa}{f}          
\newcommand{\sslash}{/\mkern-6mu/}
\newcommand{\VCD}{{\normalfont\scshape\lowercase{VCD}}}
\newtheorem{proposition}{Proposition}[section]
\newtheorem{definition}[proposition]{Definition}
\newtheorem{theorem}[proposition]{Theorem}
\newtheorem*{theorem*}{Theorem}
\newtheorem{lemma}[proposition]{Lemma}
 \newtheorem*{notation}{Notation}
\newtheorem{example}[proposition]{Example}
\theoremstyle{remark}
\newtheorem{remark}[proposition]{Remark}
\newtheoremstyle{red}{3pt}{3pt}{\color{red}}{}{\itshape}{.}{.5em}{}
\theoremstyle{red}
\title{The boundary of bordified Outer space}
\author{Karen Vogtmann}
\begin{document}

\begin{abstract}  Bux, Smillie and the author defined a  deformation retract $ \mathcal J_n$ of Outer space which is invariant under the action of $\Out(F_n)$ and has compact quotient; they then showed that  $ \mathcal J_n$ is homeomorphic to the Bestvina-Feighn bordification of Outer space.
We prove that the boundary of $\mathcal J_n$ has a covering by contractible pieces with contractible intersections, and we identify the nerve of this cover.   This strengthens the analogy between the Bestvina-Feighn bordification and both the classical Borel-Serre bordification of   symmetric space   and the Harvey bordification of Teichm\"uller space.  
\end{abstract}

\maketitle

\section{Introduction}
 In \cite{CV} M. Culler and the author introduced a space on which the group $\Out(F_n)$ acts as symmetries, which quickly became  known as Outer space.    Outer space was modeled both on the  Teichm\"uller space of a closed surface $S_g$, with its action of the mapping class group  $\Mod(S_g)$  and on the symmetric space of positive definite quadratic forms,  with its action of the special linear group $\SL_n(\Z)$. In both of these more classical settings the space is contractible and the action has finite point stabilizers, allowing one to conclude that the rational cohomology of the quotient by the action is an invariant of the  group.  The paper \cite{CV} proves that Outer space also has these features, i.e. it is   contractible and the action of $\Out(F_n)$ is proper.

The groups $\Mod(S_g),$ $\SL_n(\Z)$ and $\Out(F_n)$ all contain  torsion-free subgroups of finite index.  Since stabilizers of the actions on Teichm\"uller space, symmetric space  and Outer space are finite, these subgroups act freely and the integral cohomology of the quotient  is an invariant of the subgroup.  This obviously vanishes above the dimension of the space, but it turns out that it has to vanish  in   smaller dimensions as well.  The {\em cohomological dimension} of a group captures this; it is defined as the largest dimension in which the group has non-vanishing  cohomology with some coefficients.  It is not hard to show that all finite-index torsion-free subgroups of a group have the same cohomological dimension, which is called the {\em virtual cohomological dimension}  (\VCD) of the group.  

In order to determine the virtual cohomological dimension of $\SL_n(\Z)$ (and many other arithmetic groups) Borel and Serre  proved that $\SL_n(\Z)$ is a {\em virtual duality group}.  This means that there is an $\SL_n(\Z)$-module $D$ and an integer $d$ such that, for any torsion-free finite-index subgroup $\Gamma<\SL_n(\Z)$ and any $\Gamma$-module $M$,  there is a natural isomorphism
$$H^k(\Gamma;M)\iso H_{d-k}(\Gamma;M\otimes D).$$  One immediately concludes that $d$ is equal to the cohomological dimension of $\Gamma$, so is  the \VCD\ of $\SL_n(\Z)$.
By work of Bieri and Eckmann \cite{BE}, the group $H^*(\SL_n(\Z);\Z[\GL_n(\Z)])$ serves as a dualizing module if it vanishes in all dimensions except  $d$, where it is free abelian. This  group is easily identified with the cohomology with compact supports of any contractible cell complex with a proper cocompact $\SL_n(\Z)$ action (see \cite{Brown}, Proposition 11.3).   Although the action of $\SL_n(\Z)$ on the symmetric space $X_n$ is proper, it is not cocompact.  Borel and Serre  defined a partial compactification  $\widehat X_n$ of $X_n$ called the {\em Borel-Serre bordification}.  They showed that the  bordification    is a contractible manifold with boundary and  the action on $X_n$ extends to a proper cocompact action on $\widehat X_n$. They then used Poincar\'e-Lefschetz duality  to identify the cohomology with compact supports of $\widehat X_n$ with the reduced homology of the boundary $\bdry\widehat X_n=\widehat X_n\setminus X_n$, and showed that this boundary is homotopy equivalent to the associated spherical Tits building, which has the homotopy type of a bouquet of spheres.  

The action of the mapping class group $Mod(S_g)$ on Teichm\"uller space is also proper but not cocompact, and Harer was able to use the same strategy for $Mod(S_g)$ as Borel and Serre had used for $\SL_n(\Z)$, using a bordification   of Teichm\"uller space defined by Harvey \cite{Harvey} and using Poincar\'e-Lefschetz duality to identify its cohomology with compact supports  with the reduced homology of its boundary.  In this case the boundary is  homotopy equivalent to the curve complex of the surface, which Harer proved has the homotopy type of a bouquet of spheres \cite{Harer}.  

For $\Out(F_n)$ the tables were turned:  the   \VCD\ of $\Out(F_n)$ was known from \cite{CV} to be equal to $2n-3$ but it was not known whether $\Out(F_n)$ was a virtual duality group, which is a much stronger property.    Like the actions of $\SL_n(\Z)$ on its symmetric space and the action of $Mod(S_g)$ on Teichm\"uller space, the action of $\Out(F_n)$  is  proper but not cocompact. In \cite{BeFe} Bestvina and Feighn defined a partial compactification   $\widehat{CV}_n$ of Outer space $CV_n,$  analogous to the Borel-Serre bordification,  on which $Out(F_n)$ acts both properly and cocompactly.  The problem thus became to determine the cohomology with compact supports of $\widehat{CV}_n$.   Unlike symmetric spaces and Teichm\"uller space $CV_n$ is not a manifold, so Poincar\'e-Leftschetz duality is not available, and instead of studying the boundary they  found a different argument that took advantage of the fact that the \VCD\ was already known. What they proved is that $\widehat{CV}_n$ is $(2n-5)$-connected at infinity; they then applied results of Geogheghan and Mihalik   to conclude   that  $H^*_c(\widehat{CV}_n)$  vanishes in all dimensions other than $2n-3$, where it is free abelian.

In \cite{Gra} Grayson constructed an invariant deformation retract  $X_n^\varepsilon$ of the symmetric space $X_n$; this is a manifold with boundary and  the action of $\SL_n(\Z)$ on $X_n^\varepsilon$ is cocompact.  The boundary   $\bdry X_n^\varepsilon$ is    covered  by contractible sets with contractible intersections, so $\bdry X_n^\varepsilon$ is homotopy equivalent to the nerve of this cover.  This   nerve  is precisely the Tits building, so $X_n^\varepsilon$ serves the same purpose as the Borel-Serre bordification in the proof that $\SL_n(\Z)$ is a virtual duality group (it is widely assumed that $X_n^\varepsilon$ is in fact homeomorphic to the Borel-Serre bordification, but I could not locate a published proof).  The retract $X_n^\varepsilon$ avoids many of the technical difficulties that Borel and Serre encountered in adding points to the space $X_n$ and extending the action, and is generally   easier to  work with than the Borel-Serre construction.  

Inspired by Grayson's work, Bux, Smillie and the author  constructed an invariant deformation retract $\mathcal J_n$ of $CV_n$ in  \cite{BSV}, showed that it is homeomorphic to the Bestvina-Feighn bordification $\widehat{CV}_n$ of Outer space and used it to give a simpler proof that $\widehat{CV}_n$ is $(2n-5)$-connected at infinity.  

In the current paper we look more closely at   the boundary $\bdry\mathcal J_n$.  We show that $\bdry\mathcal J_n$ is covered by contractible subcomplexes  with contractible intersections (Proposition 2.10), so is homotopy equivalent to the nerve of the cover, extending the analogy with $X_n^\varepsilon$.   We  describe this nerve in terms of the combinatorial structure of $CV_n$ (Theorem 2.11), and at the end of the paper we  ask whether the analogy with symmetric spaces and Teichm\"uller spaces goes even further, i.e. whether   $\bdry\mathcal J_n$ is homotopy equivalent to a bouquet of spheres.    

The paper is structured as follows.  The space $\mathcal J_n$ decomposes into cells called {\em jewels}, and in Section ~\ref{sec:jewels}  we review the definition of a jewel and determine the structure of its facets.  In section 3 we glue   facets that are on the boundary of $\mathcal J_n$ together to form walls and show these walls are contractible with contractible intersections.  We do this using a model of Outer space introduced by Hatcher in \cite{Hat}.  Namely,  Outer space $CV_n$ decomposes naturally as a disjoint union of open simplices, and Hatcher identified these with open simplices in the {\em sphere complex} $\SC(M_n)$ of a doubled handlebody $M_n$.   We briefly review Hatcher's construction, then   reinterpret the jewels in Outer space in terms of the sphere complex and use this description to define the walls in   $\bdry\mathcal J_n$ and to show they are contractible with contractible intersections.  Finally, we identify the nerve of this covering with the subcomplex of $\SC(M_{n})$ spanned by simplices that are not in $CV_n$.

\section{Jewels associated to a core graph}\label{sec:jewels}
 
  Recall that a {\em core graph} is a graph with no isolated vertices or separating edges. It may have loops or mutiple edges, and it may be disconnected or have bivalent vertices.  By a  {\em subgraph} of a graph $G$ we mean the subgraph spanned by  a non-empty set of edges (so it cannot have isolated vertices), and a {\em core subgraph} is a subgraph which is itself a core graph.   Let $\sigma(G)$ be the regular Euclidean simplex whose vertices correspond to edges of $G$, so that subgraphs $H$ correspond to faces $\sigma(H)$ of $\sigma(G)$.  If $G$ is a core graph, we defined the {\em jewel} $J(G)$ to be the convex polytope obtained from $\sigma(G)$ by shaving faces opposite core subgraphs by uniform constants which increase with the rank of the subgraph corresponding to the face (see \cite{BSV}, Section 2). An example is shown in Figure~\ref{fig:jewel}.
 \begin{figure}
\begin{center}
\begin{tikzpicture}[scale=1] 
\fill [red!10] (105:1.1) to (60:1.9) to (35:2.5) to (25:2.5) to (-4:1.85) to (315:1.1) to (225:1.1) to (195:1.1) to (105:1.1);
\draw (90:2) to (210:2) to (330:2) to (90:2);
\draw (90:2) to (30:3) to (330:2);
\draw [dotted] (210:2) to (30:3);
\draw [red] (75:1.1) to (105:1.1) to (195:1.1) to (225:1.1) to (315:1.1) to (345:1.1) to (75:1.1);
\draw [red] (60:1.5) to (60:1.9) to (35:2.5) to (25:2.5) to (-4:1.85) to (-4:1.5) to (60:1.5);
\draw [red] (75:1.1) to (60:1.5);
\draw [red] (105:1.1) to (60:1.9);
\draw [red] (-4:1.5) to (345:1.1);
\draw [red] (-4:1.85) to (315:1.1);
\draw [red,densely dotted] (225:1.1) to (25:2.5);
\draw [red,densely dotted] (195:1.1) to (35:2.5);
\draw (0,0) .. controls (45:.75) and (135:.75)   .. (0,0);
\draw (0,0) .. controls (15:.75) and (285:.75)   .. (0,0);
\draw (0,0) .. controls (255:.75) and (165:.75)   .. (0,0);
\begin{scope}[xshift=1.6cm, yshift=.85cm, rotate=45]
\draw (0,0) .. controls (45:.75) and (135:.75)   .. (0,0);
\draw (0,0) .. controls (15:.75) and (285:.75)   .. (0,0);
\draw (0,0) .. controls (255:.75) and (165:.75)   .. (0,0);
\end{scope}
\node (a0) at (0,2.3) {$1$};
\node (b0) at (-2,-1.3) {$2$};
\node (c0) at (2,-1.3) {$4$};
\node (d0) at (2.9,1.7) {$3$};
\begin{scope}[xshift=-6cm]
\draw (.75,0) .. controls (2,1.25) and (2,-1.25)   .. (.75,0);
\draw (-.75,0) .. controls (-2,1.25) and (-2,-1.25)   .. (-.75,0);
\draw (-.75,0) .. controls (-.5,.5) and (.5,.5)   .. (.75,0);
\draw (.75,0) .. controls (.5,-.5) and (-.5,-.5)   .. (-.75,0);
\node (a) at (0,.6) {$e_2$};
\node (b) at (0,-.6) {$e_3$};
\node (c) at (2,0) {$e_4$};
\node (d) at (-2,0) {$e_1$};
\end{scope}
 \end{tikzpicture}
\end{center}
\caption{Example of a graph and its associated jewel.  The core subgraphs are $\{e_1\}$, $\{e_4\}$, $\{e_2,e_3\}$, $\{e_1,e_4\}$, $ \{e_1,e_2,e_3\}$,  and $\{e_2,e_3,e_4\}$. The faces opposite the vertices labeled $1$ and $4$ and the edge $[2,3]$ are shaved by $\varepsilon$, and the faces opposite the faces $[1,4]$, $[1,2,3]$  and $[2,3,4]$ and are shaved by $3\varepsilon$.}
\label{fig:jewel}
\end{figure}

The entire space $CV_n$ decomposes as a disjoint union of open simplices $\sigma(G,g)$, where $G$ is a  connected core graph  with no   bivalent vertices and $g$ is a homotopy equivalence from a fixed $n$-petaled rose $R_n$  to $G.$ We refer to \cite{Vog} for further background on Outer space. 

\begin{remark}
Note that core graphs do not have separating edges.  One may also include graphs $G$ with separating edges with the additional stipulation that $G$ can have no univalent vertices. The version of $CV_n$ we are using here  is  sometimes called {\em reduced Outer space}, and is the one used by Bestvina and Feighn.   Including separating edges does not change the cohomology with compact supports of the space. 
\end{remark}

\subsection{Facets of a jewel} In this section we examine the codimension one faces (the {\em facets}) of a jewel $J(G)$.  
\begin{notation} Let $H$ be a subgraph of $G$.
\begin{itemize}
\item $G\sslash H$ is the graph obtained from $G$ by collapsing each edge of $H$ to a point. 
\item   $H^\vee$ is the subgraph of $G$ spanned by the edges in $G-H$.  
\end{itemize}
\end{notation}

  If $e$ is a single edge of $G$ which is not a loop then $J(G)$ has a facet (opposite the vertex corresponding to $e$) isomorphic to $J(G\sslash e)$.  These facets are contained in the boundary of $\sigma(G)$ and are called {\em interior facets}.  All other facets  correspond to core graphs $C$ of $G,$ and are called {\em border facets}.   
We will denote the border facet opposite $\sigma(C)$  by $\fa_C$.  The following lemma records the fact that the border facets of $J(G)$ have a recursive structure.
 
 \begin{lemma}\label{facet} The border facet $\fa_C$ of $J(G)$   is isomorphic to $J(C)\times J(G\sslash C)$. 
 \end{lemma}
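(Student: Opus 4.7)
Plan. I would introduce affine coordinates $(x_e)_{e \in E(G)}$ on $\sigma(G)$ so that $\sigma(G) = \{x_e \geq 0 : \sum_e x_e = 1\}$ and the face opposite a subgraph $H$ is cut out by $\sum_{e \in H} x_e = 0$. The jewel $J(G)$ is then the polytope obtained by replacing each such vanishing condition, for $H$ a core subgraph, by an inequality $\sum_{e \in H} x_e \geq c_H$, where $c_H$ is the rank-monotone constant fixed in \cite{BSV}. The border facet $\fa_C$ lies in the active hyperplane $H_C = \{\sum_{e \in C} x_e = c_C\}$, so $\fa_C = J(G) \cap H_C$.

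The first key observation is that intersecting $\sigma(G)$ with $H_C$ decouples the coordinates: the $(x_e)_{e \in C}$ sum to $c_C$ and the $(x_e)_{e \in C^\vee}$ sum to $1 - c_C$. Rescaling by $1/c_C$ on the first block and $1/(1-c_C)$ on the second yields a canonical affine isomorphism $\sigma(G) \cap H_C \cong \sigma(C) \times \sigma(C^\vee) = \sigma(C) \times \sigma(G \sslash C)$, since by definition the edges of $G \sslash C$ are precisely the edges of $C^\vee$. The problem thus reduces to showing that the remaining defining inequalities of $J(G)$, restricted to $H_C$ and written in rescaled coordinates, agree with the defining inequalities of $J(C) \times J(G \sslash C)$.

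To compare the two, I would look at a core subgraph $C' \neq C$ of $G$ and split into three cases according to how $C'$ meets $C$. (i) If $C' \subseteq C$, the restricted constraint is, up to rescaling, the $J(C)$-shaving of the face of $\sigma(C)$ opposite $\sigma(C')$, and $C'$ is automatically a core subgraph of $C$. (ii) If $C' \subseteq C^\vee$, the symmetric statement gives a shaving on the second factor; one must check both that the image of $C'$ in $G \sslash C$ is a core subgraph (it is, because collapsing $C$ only increases valences of vertices of $C'$) and that every core subgraph of $G \sslash C$ arises this way, which one verifies by lifting and, if necessary, adjoining $C$ to build a core subgraph of $G$. (iii) If $C'$ crosses $C$, meeting both $C$ and $C^\vee$, the restricted constraint is a mixed linear combination of a sum over $C' \cap C$ and a sum over $C' \cap C^\vee$. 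Here I would argue that it is automatically implied on the product by the constraints in (i) and (ii): passing to the core hulls $\widetilde{C' \cap C}$ and $\widetilde{C' \cap C^\vee}$ and using the subadditivity $\mathrm{rank}(C') \leq \mathrm{rank}(\widetilde{C' \cap C}) + \mathrm{rank}(\widetilde{C' \cap C^\vee})$ together with the rank-monotonicity of $c_{(-)}$, one sees that the crossing constraint contributes nothing new.

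Once the inequalities are matched, the rescaling $x_e \mapsto x_e/c_C$ on $C$-coordinates and $x_e \mapsto x_e/(1-c_C)$ on $C^\vee$-coordinates is the desired affine isomorphism $\fa_C \to J(C) \times J(G \sslash C)$. The main obstacle is case (iii): verifying that crossing constraints are genuinely redundant is the step where the precise choice of shaving constants from \cite{BSV} enters the argument, and it is the place where careful numerical and rank bookkeeping is required.
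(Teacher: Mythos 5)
Your overall strategy---computing $\fa_C = J(G)\cap\{\sum_{e\in C}x_e=c_C\}$ directly from the defining inequalities and matching them, after rescaling, against those of $J(C)\times J(G\sslash C)$---is genuinely different from the paper's proof, which instead quotes the description of the face poset of $J(G)$ from Proposition 7.1 of \cite{BSV} and writes down a poset isomorphism by splitting each chain of core subgraphs at $C$. Your route could in principle work, but the case analysis misidentifies which inequalities are essential on the facet, and this is a real gap. The border facets of the second factor $J(G\sslash C)$ are cut out by the constraints coming from core subgraphs $C'$ of $G$ with $C'\supsetneq C$: core subgraphs of $G\sslash C$ correspond bijectively to core subgraphs of $G$ containing $C$ (via $C'\mapsto C'\sslash C$), and on the hyperplane $\sum_{e\in C}x_e=c_C$ the constraint for such a $C'$ restricts to $\sum_{e\in C'\setminus C}x_e\ge c_{C'}-c_C$. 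These $C'$ meet both $C$ and $C^\vee$, so in your trichotomy they fall into case (iii) and are discarded as redundant---but they are not. Concretely, let $G$ be a theta graph with edges $a,b,c$ plus a loop $d$ at one vertex, and let $C=\{a,b\}$. Then $G\sslash C$ is a two-petal rose on $c,d$, but $\{c\}$ and $\{c,d\}$ are not core subgraphs of $G$, so your cases (i) and (ii) impose no lower bound on $x_c$ at all; the shaving of the face of $\sigma(G\sslash C)$ opposite the loop $c$ comes only from the crossing subgraph $\{a,b,c\}\supsetneq C$, whose constraint restricts to $x_c\ge c_{\{a,b,c\}}-c_{\{a,b\}}=2\varepsilon>0$. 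Your rank subadditivity also fails exactly here: $\{a,b,c\}$ has rank $2$, while the cores of $\{a,b,c\}\cap C$ and $\{a,b,c\}\cap C^\vee$ have ranks $1$ and $0$.

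The complementary error is in case (ii): constraints from core $C'\subseteq C^\vee$ are precisely the ones that tend to be redundant on $\fa_C$. Since $C\cup C'$ is again core whenever $C$ and $C'$ are, the constraint for $C\cup C'\supsetneq C$ restricts on the facet to $\sum_{e\in C'}x_e\ge c_{C\cup C'}-c_C$, which with constants growing like $3^{\mathrm{rank}}$ dominates $\sum_{e\in C'}x_e\ge c_{C'}$ (in the example, $\{d\}$ gives $x_d\ge\varepsilon$ while $\{a,b,d\}$ gives $x_d\ge 2\varepsilon$). The correct reorganization is therefore: $C'\subseteq C$ yields the first factor, $C'\supseteq C$ yields the second factor, and every remaining $C'$ is redundant, being implied by the constraint for the core of $C\cup C'$ together with positivity of coordinates. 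With that repair, plus a verification that the rescaled constants still satisfy the inequalities characterizing jewels in Proposition 2.2 of \cite{BSV}, your coordinate argument should go through; the paper's poset-level argument avoids this inequality bookkeeping entirely, which is what it buys you.
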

 
 Note that both $C$ and $G\sslash C$ are  core graphs, so $J(C)$ and $J(G\sslash C)$ are defined.
 
 \begin{proof} By Proposition 7.1 of \cite{BSV}, the codimension $(t+k)$ faces of $J(G)$ are uniquely determined by sets $S=\{e_1,\ldots,e_t, C_1,\ldots, C_k\}$, where 
\begin{enumerate}
\item the $e_i$ are edges of $G$ and $F=e_1\cup \ldots\cup e_t$ is a forest in $G$
\item $C_1\subset \ldots\subset C_k$ is a chain of proper core subgraphs, and
\item $C_i$ is the core of $F\cup C_i$.  
\end{enumerate}
The face $f_S$ determined by $S$ is a face of $f_T$ if and only if  $S\supset T$, so that $J(G)$ has an alternate description as the geometric  realization of the poset of such sets, ordered by inclusion.

Let $C$ be a core graph and  $S=\{C\}$  the  corresponding codimension one border facet, i.e. the face $f_C$   opposite  $\sigma(C)$. The isomorphism  $f_C\iso J(C)\times J(G\sslash C)$ is given by the poset map
 $$\{F,C_0\subset\ldots\subset C_{i-1}\subset C\subset C_{i+1}\subset \ldots\subset C_k\}\mapsto \hbox{\hskip 4cm}$$
$$ \hbox{\hskip 3cm} (\{F\cap C,C_0\subset\ldots\subset C_{i-1}\},\{F\sslash (F\cap C),C_{i+1}\sslash C\subset\ldots\subset C_{k}\sslash C\} ).$$
The first factor can clearly be identified with $J(C)$.  To see that the second can be identified with $J(G\sslash C)$  observe that
\begin{enumerate}
\item  the image of $F$ in $G\sslash C$ is still a forest since $C=core(F\cup C)$,   
\item    the image  $C_i\sslash C$ is still a core graph, which is 
\item  equal to the core of $F\sslash (F\cap C)\cup C_i\sslash C=(F\cup C_i)\sslash C$.   
\end{enumerate} 
  \end{proof}
  
   The facet $f_C$ is close to, and parallel to, the face $\sigma(C^\vee),$ so it is sometimes more natural to label it by  $H=C^{\vee}$; in this case we will denote it by $\fa^{H}$.   Thus$\fa^H$ is only defined when $H^\vee$ is a core graph $C$, in which case   $\fa^H = \fa_{C},$ and we say that this facet is {\em opposite} $\sigma(C)$ and {\em faces $\sigma(H)$}.

 \section{Walls in the boundary of $\mathcal J_n$}

Jewel space   $\mathcal J_n$ is the union of all marked jewels $J(g,G),$   and is homeomorophic to the Bestvina-Feighn bordification $\widehat{CV}_n$ by \cite{BSV}. The border facets form the boundary of $\mathcal J_n$.  In this section we cover this boundary by contractible subcomplexes with contractible intersections.  This is analogous to Borel and Serre's covering of the bordification of symmetric space by Euclidean spaces $e(P)$ associated to parabolic subgroups $P$.  The nerve of Borel and Serre's covering is homotopy equivalent to the associated  spherical Tits building.  In our case the nerve of the covering is homotopy equivalent to a subcomplex of the simplicial closure of Outer space, which we will identify with a certain sphere complex.  We begin by briefly reviewing the relevant definitions.    

\subsection{The sphere complex and Outer space} Outer space $CV_n$ decomposes as a union of open simplices $\sigma(G,g)$, one for each equivalence class of marked graphs $(G,g)$. Here $g\colon R_n\to G$ is a homotopy equivalence from the standard rose $R_n$, and $(g,G)$ is equivalent  to $(G',g')$ if there is a graph isomorphism $h\colon G\to G'$ with $h\circ g$ homotopic to $g'$.  The faces of some of these simplices are not in $CV_n$, namely the faces obtained by collapsing a subgraph that contains a loop, so this does not give $CV_n$ the structure of a simplicial complex.  However, adding all of the missing faces does give a simplicial complex, known as the {\em simplicial closure} $CV_n^*$ of Outer space; this is the smallest simplicial complex containing $CV_n$ as a union of open simplices.
A nice way to  understand $CV_n^*$ is by identifying open simplices in $CV_n$ with open simplices in the   {\em sphere complex} of the 3-manifold $M_n=\#_n (S^1\times S^2).$  One can then identify  the full sphere complex $\SC(M_n)$  with $CV_n^*$.   We refer the reader to \cite{Hat} for details, but sketch the ideas  below.  

An embedded $2$-sphere in a $3$-manifold  $M$ is {\em trivial} if it bounds a ball or is parallel to a boundary component. A {\em sphere system} is a set of isotopy classes of non-trivial $2$-spheres which has a set of pairwise-disjoint representatives.   The sphere complex $\SC(M)$ is the simplicial complex with  a $k$-simplex $\sigma(\ssS)$ for each  sphere system $\ssS$ in $M$ with $k+1$ elements.    

\begin{definition} A sphere system $\ssS$ is {\em complete} if and only if all components of the manifold $M-\ssS$ obtained by cutting $M$ open along $\ssS$ are simply-connected. It is {\em core} if it contains no separating spheres.  
\end{definition}

In the case $M=M_n$ a sphere system $\ssS$ is complete if  each component of $M-\ssS$ is homeomorphic to a punctured ball. Note that a complete system must have at least $n$ spheres, and a complete system with exactly $n$ spheres has exactly one complementary component.  Every maximal sphere system cuts $M_{n}$ into 3-punctured balls, so in particular is complete.   
  
\begin{proposition} [\cite{Hat}, Appendix]  Outer space $\CVn$ is the union of the interiors of simplices $\sigma(\ssS)$ in $\SC(M_{n})$ for $\ssS$ complete and core.  
\end{proposition}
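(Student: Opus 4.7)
The plan is to set up Hatcher's explicit bijection between isotopy classes of sphere systems in $M_n$ and marked graphs indexing the cells of $CV_n^*$, under which the interior of $\sigma(\ssS)$ is identified with an open simplex of $CV_n^*$, and the two combinatorial conditions on $\ssS$ translate into exactly the conditions that place that open simplex inside $CV_n$ itself.

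First I would describe the forward assignment. To each sphere system $\ssS\subset M_n$ associate its \emph{dual graph} $G_\ssS$: vertices are the components of $M_n\setminus\ssS$ and edges are the spheres of $\ssS$, each edge joining the two vertices on either side. A regular neighborhood of an embedded spine of $G_\ssS$ inside $M_n$, together with the fixed identification $\pi_1(M_n)\cong F_n$, produces a marking $g\colon R_n\to G_\ssS$ well-defined up to homotopy. A point in the interior of $\sigma(\ssS)$ is a positive weight on each sphere summing to $1$; reading these weights as edge lengths identifies the interior of $\sigma(\ssS)$ with the open simplex $\sigma(G_\ssS,g)$ of $CV_n^*$.

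Next I would verify the two equivalences that pin down when the resulting open simplex lies in $CV_n$. First, $\ssS$ is complete iff every complementary component is simply connected, iff $M_n$ deformation retracts onto a regular neighborhood of $G_\ssS$, iff $\pi_1(G_\ssS)\cong F_n$; any complementary component with non-trivial fundamental group strictly lowers the rank of the dual graph. Second, a sphere $S\in\ssS$ is separating in $M_n$ iff removing the corresponding edge disconnects $G_\ssS$, so $\ssS$ is core exactly when $G_\ssS$ has no separating edges, matching the definition of a core graph used throughout the paper.

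For surjectivity I would construct an explicit inverse. Given a marked connected core graph $(G,g)$ with $\pi_1(G)\cong F_n$, embed $G$ in $\R^3$ and thicken to a genus-$n$ handlebody $H_G$. The double $H_G\cup_{\partial H_G}H_G$ is diffeomorphic to $M_n$, and $g$ determines this identification on $\pi_1$. For each edge of $G$ the cocore disk of the corresponding $1$-handle of $H_G$ doubles to an embedded $2$-sphere, giving a sphere system whose dual graph is $(G,g)$ by construction; it is complete because the vertex balls are simply connected and core because $G$ has no separating edge.

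The substantive obstacle is showing these two constructions are mutually inverse on \emph{isotopy classes}: an embedded sphere system in $M_n$ is determined up to isotopy by its combinatorial dual graph, and combinatorially disjoint spheres can be isotoped to be geometrically disjoint. This is Laudenbach's rigidity theorem and is the main technical content of Hatcher's appendix. Granted it, the bijection is simplicial, and combining with the two equivalences above identifies the union of interiors of $\sigma(\ssS)$ for $\ssS$ complete and core with $CV_n\subset CV_n^*\cong\SC(M_n)$.
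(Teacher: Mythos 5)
Your proposal follows essentially the same route the paper takes: the paper does not prove this proposition itself but cites Hatcher's appendix and sketches exactly the two constructions you describe (dual graph with marking from the fixed identification $\pi_1(M_n)\cong F_n$ in one direction; fattening a graph to a handlebody and doubling in the other), with the isotopy-rigidity input deferred to Laudenbach and Hatcher. Your identification of where the real technical content lies, and your translation of ``complete'' and ``core'' into the graph conditions, match the paper's treatment.
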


The translation from the original description of Outer space as a space of marked metric graphs $(G,g)$ goes as follows. Using barycentric coordinates on the simplices $\sigma(\ssS)$,  a point in $\SC(M_n)$ can be thought of as a {\em weighted} complete sphere system, where the weight on each sphere is a positive real number, and the sum of the weights is equal to 1.   Dual to any sphere system in $M_n$ is a graph, with one vertex for each component of $M_n-\ssS$ and one edge for each sphere in $\ssS$; for example,  the dual graph to a minimal complete system is a rose and the dual graph to a maximal system is a trivalent graph.   If we fix an identification of $\pi_1(M_n)$ with $F_n$, and if all components of $M-\ssS$ are simply-connected there is an embedding of this dual graph into $M_n$ (see Figure~\ref{fig:dual}) that is unique up to homotopy and identifies the fundamental group of the graph with the fundamental group of $M_n$ up to conjugacy, i.e. it determines a marking of the dual graph.  The weight on the sphere intersecting a given edge can be interpreted as the length of the edge; thus a sphere system determines  a marked metric graph with volume one.  

\begin{figure}
\begin{center}
\begin{tikzpicture} [scale=.5]
   \draw []  plot [smooth cycle, tension=.8] coordinates 
 { (-6,0)   (-4.5 ,2)  (-2,1.25)   (0,2)   (2,1.25)   (4.5,2)  (6,0) (4.5,-2) (2,-1.25)  (0,-2) (-2,-1.25) (-4.5,-2)
    };
\draw  (.70,.1) arc (25:155:.75);
\draw   (.75,.3) arc (-10:-170:.75);
\begin{scope}[xshift=3.75cm]
\draw (.70,.1) arc (25:155:.75);
\draw (.75,.3) arc (-10:-170:.75);
\end{scope}
\begin{scope}[xshift=-3.75cm]
\draw (.70,.1) arc (25:155:.75);
\draw  (.75,.3) arc (-10:-170:.75);
\end{scope}

 \draw[thick, red, fill=red!10] (5.2,0) ellipse (.8cm and .25cm); 
 \node[right] (s4) at (6,0) {$\sph_4$};
 \draw[thick, red,  fill=red!10] (-5.2,0) ellipse (.8cm and .25cm); 
  \node[left] (s1) at (-6,0) {$\sph_1$};
   \draw[thick, red,  fill=red!10] (0,1.25) ellipse (.25cm and .7cm); 
    \node[above] (s2) at (0,2) {$\sph_2$};
 \draw[thick, red,  fill=red!10] (0,-1.15 ) ellipse (.25cm and .82cm); 
     \node[below] (s2) at (0,-2) {$\sph_3$};
 
\draw [fill=blue] (-2,0) circle(.05);
\draw [fill=blue] (2,0) circle(.05);
\draw [blue] (-2,0) .. controls (-6,3) and (-6,-3) .. (-2,0);
\draw [blue] ( 2,0) .. controls ( 6,3) and ( 6,-3) .. ( 2,0);
\draw [blue] ( -2,0) .. controls ( -1,1.5) and ( 1,1.5) .. ( 2,0);
\draw [blue] ( -2,0) .. controls ( -1,-1.5) and ( 1,-1.5) .. ( 2,0);
\end{tikzpicture} 
\end{center}
\caption{ A complete core sphere system $\mathcal S=\{\sph_1,\sph_2,\sph_3,\sph_4\}$ in $M_3=\#_3 (S^1\times S^2)$ (double the pictured handlebody and disks) and its dual graph.  }
\label{fig:dual}
\end{figure} 

 In the other direction, given a marked metric graph with fundamental group $F_n$, put a point in the middle of each edge,
fatten the graph to a handlebody (so the points become disks), double the handlebody to get a manifold homeomorphic to $M_n,$ (so the disks become $2$-spheres).  Now   use the marking to construct a homeomorphism to $M_n$.  The images of the $2$-spheres  are a sphere system in $M_n$.  

One advantage of this description of $\CVn$ is that a single sphere can be a member of many different, incompatible sphere systems, i.e. an edge of a marked graph has a life independent of any particular marked graph containing it.

Another advantage is that the proof that  $\CVn$ 
is contractible using surgery paths in sphere systems \cite{HV} works with only minor modifications for the analogous space $\CVns$ defined using $M_{n,s}=M_n\backslash \coprod_{i=1}^s B_s$, the manifold obtained by removing $s$ disjoint balls from $M_n.$  Then $\CVns$ is equal to the subspace of $\SC(M_{n,s})$ consisting of the union of the interiors of simplices $\sigma(\ssS)$ for $\ssS$ complete and core.  This can also be interpreted as a space of marked graphs with leaves; to see the idea, take the dual graph of a sphere system, then  attach a  leaf for each boundary sphere $\mathfrak s$, emanating from  the vertex corresponding to the component of $M_{n,s}-\ssS$ that contains $\mathfrak s$.

Any diffeomorphism of a 3-manifold $M$ sends disjoint $2$-spheres to disjoint $2$-spheres, so acts on $\SC(M)$. By a theorem of Laudenbach \cite{Lau}, $Out(F_n)$ is isomorphic to $\pi_0(\hbox{Diff}(M_n))/D$, where $D$ is a finite group which acts trivially on isotopy classes of $2$-spheres  ($D$ is generated by a finite number of ``Dehn twists in $2$-spheres").  Under the identification of the space of marked metric graphs with the space of weighted sphere systems, the induced action of $Out(F_n)$ on $\SC(M_n)$ becomes the old action of $Out(F_n)$ on $CV_n$.   For $s>0$, define   $\Gamma_{n,s}$ to be the group of isotopy classes of diffeomorphisms of $M_{n,s}$ fixing the boundary, modulo Dehn twists on $2$-spheres (or, equivalently, the group of homotopy classes of homotopy equivalences of a rose with $s$ leaves that fix the leaves).  Then $\Gamma_{n,s}$ acts on $\SC(M_{n,s})$, and the stabilizer of a point permutes the spheres (preserving weights), so is always finite.  In addition to the fact that $\Gamma_{n,0}\iso Out(F_n)$, we also have    $\Gamma_{n,1}\iso Aut(F_n)$   .

\begin{definition} The space $\CVns$ is the union of the interiors of simplices $\sigma(\ssS)$ in $\SC(M_{n,s})$ for $\ssS$ complete and core.  
\end{definition}

Note that $\CVns$ is  not a subcomplex of $\SC(M_{n,s})$, just a subspace, since each simplex is missing some faces (in particular, all of its vertices are missing).   The entire simplicial complex $\SC(M_{n,s})$ is   the  simplicial closure  of $CV_{n,s}$ and we will also use the notation $CV_{n,s}^*$.

\subsection{Sphere system description of $\mathcal J_n$}

We now describe the jewel retract $\BVns$ in the language of sphere systems.  Since the jewel associated to $J(G,g)$ is obtained by truncating certain faces of $\sigma(G,g)$ by amounts satisfying certain constraints, we begin by identifying the relevant faces and constants.  
\begin{definition} Let $\ssS$ be a core sphere system in $M=M_{n,s}$. A subsystem $\ssT\subset \ssS$ is a {\em core complement} if $\ssS-\ssT$ is core in $M-\ssT$.   \end{definition} 
See Figure~\ref{fig:corecomplement} for an example. 
\begin{figure}
\begin{center}
\begin{tikzpicture} [scale=.5]
   \draw []  plot [smooth cycle, tension=.8] coordinates 
 { (-6,0)   (-4.5 ,2)  (-2,1.25)   (0,2)   (2,1.25)   (4.5,2)  (6,0) (4.5,-2) (2,-1.25)  (0,-2) (-2,-1.25) (-4.5,-2)
    };
\draw  (.70,.1) arc (25:155:.75);
\draw   (.75,.3) arc (-10:-170:.75);
\begin{scope}[xshift=3.75cm]
\draw (.70,.1) arc (25:155:.75);
\draw (.75,.3) arc (-10:-170:.75);
\end{scope}
\begin{scope}[xshift=-3.75cm]
\draw (.70,.1) arc (25:155:.75);
\draw  (.75,.3) arc (-10:-170:.75);
\end{scope}

 \draw[thick, red, fill=red!10] (5.2,0) ellipse (.8cm and .25cm); 
 \node[right] (s4) at (6,0) {$\sph_4$};
 \draw[thick, red,  fill=red!10] (-5.2,0) ellipse (.8cm and .25cm); 
  \node[left] (s1) at (-6,0) {$\sph_1$};
  \draw[thick, red, fill=red!10] (-1.9,0) ellipse (1.3cm and .25cm); 
    \node (s2) at (-2,.75) {$\sph_2$};
 \draw[thick, red,  fill=red!10] (0,-1.15 ) ellipse (.25cm and .82cm); 
     \node[below] (s2) at (0,-2) {$\sph_3$};
 
\draw [fill=blue] (-2,-.75) circle(.05);
\draw [fill=blue] (2,0) circle(.05);
\draw [blue] (-2,-.75) .. controls (-10,-1) and (0,3.25) .. (2,0); 
\draw [blue] ( 2,0) .. controls ( 6,3) and ( 6,-3) .. ( 2,0); 
\draw [blue] ( -2,-.75) .. controls ( -1,1.5) and ( 1,1) .. ( 2,0); 
\draw [blue] ( -2,-.75) .. controls ( -1,-1.5) and ( 1,-1.5) .. ( 2,0); 
\end{tikzpicture} 
\end{center}
\caption{The core complements in $\mathcal S=\{\sph_1,\sph_2,\sph_3,\sph_4\}$ are $\{\sph_1\},\{\sph_2\},\{\sph_3\},
\{\sph_4\}, \{\sph_1, \sph_4\}, \{ \sph_2, \sph_4\}$ and $\{\sph_3,\sph_4\}.$}
\label{fig:corecomplement}
\end{figure} 

For any sphere system $\ssS$ in $M=M_{n,s}$, define $h(\ssS)$ to be the number of spheres in $\ssS$ minus the number of complementary components:
$$h(\ssS)=\#\ssS- \hbox{rank}(H_0(M-\ssS)).$$
(In terms of graphs, $h(\ssS)$ is ``edges minus vertices," i.e. the negative of the Euler characteristic of the graph dual to $\ssS$.)  

\begin{example}  If $\ssS=\{\sph\}$ is a single non-separating sphere, then $h(\sph)=0$. If $\ssS$ is complete, then $h(\ssS)=n-1$.    
\end{example}

If $\ssT$ is a core complement in $\ssS$, let $r(\ssS,\ssT)=h(\ssS)-h(\ssT) \hbox{ and }$
 $$t(\ssS,\ssT)= \frac{3^{r(\ssS,\ssT)}}{N}$$ for some fixed $N\gg n$.

We can now construct the jewel of a (not necessarily complete)  core sphere system:

\begin{definition} Let $\ssS$ be a core sphere system.
 The jewel $J(\ssS)\subset \sigma(\ssS)$ is the polytope obtained by truncating the  face $\sigma(\mathcal T)$ whenever $\ssT\subset \ssS$ a core complement, by the amount $t(\ssS,\ssT)$.  The codimension one face of $J(\ssS)$ obtained by truncating $\sigma(\ssT)$ is called the {\em facet} $\varphi(\ssS,\ssT)$. 
\end{definition}

\begin{lemma} Let $G$ be the graph dual to $\ssS$.  Then $J(G,g)$ is isometric to $J(\ssS)$. 
\end{lemma}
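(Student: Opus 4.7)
The plan is to show that both $J(G,g)$ and $J(\ssS)$ arise from a common regular Euclidean simplex by shaving the same facets by the same amounts.

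First, I set up the bijection of simplices. Under Hatcher's identification reviewed above, the spheres of $\ssS$ correspond bijectively to the edges of the dual graph $G$ (with marking $g$ coming from the fixed isomorphism $\pi_1(M)\cong F_n$). This bijection extends linearly to a canonical isometry $\sigma(G)\cong\sigma(\ssS)$, under which a face $\sigma(H)$ of $\sigma(G)$ corresponds to the face $\sigma(\ssT)$ of $\sigma(\ssS)$, where $\ssT\subset\ssS$ is the subsystem dual to $H\subset E(G)$.

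Second, I match the facets to be shaved. Setting $C=H^\vee$, I will show that $\ssT$ is a core complement in $\ssS$ if and only if $C$ is a core subgraph of $G$. The key topological observation is that cutting $M$ along $\ssT$ corresponds on the dual graph to deleting the edges in $H$ (while keeping all vertices of $G$), so the components of $M-\ssT$ are in natural bijection with the components of $G\setminus H$. A sphere $\sph\in\ssS-\ssT$ separates $M-\ssT$ precisely when the dual edge is a bridge of $G\setminus H$, which in turn is equivalent to its being a separating edge of $C$ (since isolated vertices of $G\setminus H$ play no role in bridge-ness). Thus ``$\ssS-\ssT$ is core in $M-\ssT$'' translates exactly to ``$C$ has no separating edge'', and combined with the tautological absence of isolated vertices in a subgraph spanned by edges, this is the defining condition of a core subgraph.

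Third, I match the shaving constants. Using the component-count bijection from Step 2, the definitions unpack to $h(\ssS)=E(G)-V(G)$ and $h(\ssT)=E(G)-E(C)-b_0(C)-V(G)+V(C)$, where $V(C)$ is the number of vertices of $G$ incident to some edge of $C$. A short Euler-characteristic bookkeeping then yields
\[ r(\ssS,\ssT)=h(\ssS)-h(\ssT)=E(C)-V(C)+b_0(C)=b_1(C)=\mathrm{rank}(C). \]
Therefore $t(\ssS,\ssT)=3^{\mathrm{rank}(C)}/N$, which is exactly the shaving constant used in the definition of $J(G)$ in \cite{BSV}. Under the isometry $\sigma(G)\cong\sigma(\ssS)$, Steps 2 and 3 show that the shaved facets correspond and the shaving amounts agree, so the resulting polytopes are isometric. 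The main obstacle is Step 2: producing a clean dictionary between sphere-system topology (cutting, separating spheres) and subgraph combinatorics (edge deletion, bridges). Once this dictionary is in place, the isometry follows from elementary Euler-characteristic calculation.
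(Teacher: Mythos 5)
Your proof is correct and takes essentially the same approach as the paper's (much terser) argument, which simply asserts the two facts you verify in detail: that $\ssT$ is a core complement exactly when the edges dual to $\ssS-\ssT$ span a core subgraph, and that the truncation constants agree with those characterizing $J(G,g)$ in \cite{BSV}. Your Euler-characteristic computation $r(\ssS,\ssT)=\mathrm{rank}(C)$ is precisely the content hidden in the paper's second sentence.
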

\begin{proof} The subsystem $\ssT\subset \ssS$ is a core complement if and only if the edges dual to $\ssS-\ssT$ form a core sub graph of the graph dual to $\ssS$. Furthermore, the truncation constants satisfy the conditions   used in the proof of Proposition 2.2 of \cite{BSV}, which characterizes the jewel $J(G,g)$.
 \end{proof}

If we glue together all jewels associated to complete systems in $M_{n,s}$ we obtain the jewel space $\BVns=\mathcal J(M_{n,s}),$ as explained in \cite{BSV}. We are interested in studying the  {\em boundary}  $\bdry\BVns,$ i.e. the union of the  facets $\varphi(\ssS,\ssT)$ for all complete core $\ssS$ and core complements $\ssT\subset \ssS$. 

\begin{lemma}\label{ccore}  Let $\ssT$ be an incomplete core system in $M$.  Then $\ssT$ is a core complement for a  core system $\ssS$ if and only if $\ssS-\ssT$ is a  core system in $M-\ssT$.  
\end{lemma}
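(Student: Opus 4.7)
The ``only if'' direction is immediate from the definition of core complement, so the plan is to address the converse: assuming $\ssT$ is core in $M$ and $\ssS-\ssT$ is core in $M-\ssT$, show that $\ssS$ is core in $M$, i.e., every sphere of $\ssS$ is non-separating in $M$. Spheres in $\ssT$ are non-separating by hypothesis, so the work lies in treating a sphere $\sph\in\ssS-\ssT$.

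I would proceed by contradiction: suppose $\sph$ separates $M$, so that $M-\sph=M_1\sqcup M_2$. Since $\sph$ is disjoint from every sphere of $\ssT$, a small bicollar neighborhood of $\sph$ meets neither $\ssT$ nor any other sphere of $\ssS$, so I can choose points $x_L,x_R$ in that bicollar on opposite sides of $\sph$, both lying in $M-\ssT$, with $x_L\in M_1$ and $x_R\in M_2$. The hypothesis that $\sph$ is non-separating in $M-\ssT$ says that the component $C$ of $M-\ssT$ containing $\sph$ satisfies that $C-\sph$ is connected, so there is a path from $x_L$ to $x_R$ inside $(M-\ssT)-\sph\subset M-\sph$. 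But a connected subset of the disjoint union $M_1\sqcup M_2$ must lie entirely in one piece, contradicting $x_L\in M_1$ and $x_R\in M_2$.

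The argument is short; the main thing to be careful about is the opening step, namely arranging the basepoints $x_L,x_R$ to lie simultaneously in $M-\ssT$ and on genuinely opposite sides of $\sph$ in $M$. This uses only the disjointness of spheres in a sphere system, which ensures that a bicollar of $\sph$ can be chosen small enough to miss $\ssT$. One can repackage the same argument using the dual graph of $\ssT$ in $M$ (vertices: components of $M-\ssT$; edges: spheres of $\ssT$): the non-separation hypothesis says this dual graph is unchanged up to relabelling when $M$ is replaced by $M-\sph$, and $M$ connected gives the graph connected, hence $M-\sph$ is connected as well. I expect no serious obstacles; notably, the coreness of $\ssT$ is needed only to cover the subcase $\sph\in\ssT$.
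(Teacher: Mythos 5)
Your proposal is correct and follows the same route as the paper: the paper's proof simply cites the definition together with the observation that a sphere in $\ssS-\ssT$ which does not separate $M-\ssT$ cannot separate $M$, and your bicollar/path argument is precisely a (correct) proof of that observation via its contrapositive. You have just written out in detail the one-line topological fact the paper takes as evident.
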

\begin{proof} This follows from the definition of core complement, together with the observation that if a sphere in $\ssS-\ssT$ does not separate $M-\ssT$, then it does not separate $M$.  
\end{proof}

If $\ssT$ is any subsystem of a core system $\ssS$, then    $\ssS$ is complete in $M$ if and only if $\ssS-\ssT$ is  complete   in $M-\ssT$.  The lemma thus shows that there are lots of complete core systems $\ssS$ containing a given  incomplete system $\ssT$ as a core complement.

The boundary $\bdry\BVns$ is the union of {\em walls} $W(\ssT)$ for $\ssT$ incomplete,  where
$$W(\ssT)=\bigcup_{\ssS \hbox{ \tiny complete core, }\\ \ssS\supset \ssT}\varphi(\ssS,\ssT)$$
 
 To understand $W(\ssT)$, we translate Lemma~\ref{facet} into the language of sphere systems:
 
 \begin{lemma} \label{sphere_facet}Let $\ssT$ be a core complement to a core system $\ssS$ in $M$.  Then $$\varphi(\ssS,\ssT)\iso J_M(\ssT)\times J_{M-\ssT}(\ssS-\ssT).\qed$$ 
 \end{lemma}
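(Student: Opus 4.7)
The plan is to deduce Lemma~\ref{sphere_facet} directly from Lemma~\ref{facet} by translating through the dual-graph dictionary established by the preceding lemma (which identifies $J(G,g)$ with $J(\ssS)$ when $G$ is dual to $\ssS$).

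First, fix a marked graph $G$ dual to the core sphere system $\ssS$ in $M=M_{n,s}$. Under the dictionary, spheres in $\ssS$ correspond to edges of $G$; in particular, $\ssT$ corresponds to the subgraph $H\subseteq G$ spanned by edges dual to $\ssT$. Set $C:=H^{\vee}$, the subgraph of $G$ spanned by edges dual to $\ssS-\ssT$. The hypothesis that $\ssT$ is a core complement — equivalently, that $\ssS-\ssT$ is core in $M-\ssT$ — translates into the statement that $C$ is a core subgraph of $G$, since separating edges of $C$ (viewed standalone) correspond exactly to separating spheres of $\ssS-\ssT$ in $M-\ssT$.

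Next I match facets. By the construction of $J(\ssS)$, the facet $\varphi(\ssS,\ssT)$ is the one obtained by shaving $\sigma(\ssT)$, which under the dictionary is the face $\sigma(H)$; so $\varphi(\ssS,\ssT)$ is identified with the border facet $\fa^{H}=\fa_{C}$ of $J(G)$. Lemma~\ref{facet} then yields
\[
\varphi(\ssS,\ssT)\;\cong\;\fa_{C}\;\cong\;J(C)\times J(G\sslash C).
\]
It remains to reinterpret each factor in sphere-system language. Collapsing $C$ in $G$ means collapsing the edges dual to $\ssS-\ssT$, i.e., deleting those spheres; the resulting graph has edges dual to $\ssT$ and vertices indexed by the components of $M-\ssT$, so $G\sslash C$ is the marked graph dual to $\ssT$ in $M$, giving $J(G\sslash C)\cong J_{M}(\ssT)$. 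Dually, $C$ has edges indexed by $\ssS-\ssT$ and vertices indexed by the components of $M-\ssS=(M-\ssT)-(\ssS-\ssT)$, so $C$ is the marked graph dual to $\ssS-\ssT$ in the cut-open manifold $M-\ssT$, whence $J(C)\cong J_{M-\ssT}(\ssS-\ssT)$. Composing, $\varphi(\ssS,\ssT)\cong J_{M}(\ssT)\times J_{M-\ssT}(\ssS-\ssT)$.

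The main point requiring care is not the combinatorics of faces (which is immediate from the dictionary) but the verification that the isomorphism preserves the jewel structure — that is, the truncation constants $t(\ssS,\ssT)=3^{r(\ssS,\ssT)}/N$ for the relevant subsystems match the shaving constants used in Lemma~\ref{facet} for the corresponding core subgraphs of $G$, $C$, and $G\sslash C$. This is where one must be careful: the rank function $h$ is additive in the sense that $h(\ssS)-h(\ssT)$ equals the rank of the subgraph of $G$ dual to $\ssS-\ssT$, and the recursive structure of core complements inside $\ssT$ and inside $\ssS-\ssT$ (within $M-\ssT$) exactly reproduces the recursive structure of core subgraphs of $C$ and $G\sslash C$. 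Given the isometric identification $J(G,g)\cong J(\ssS)$ from the preceding lemma, this functoriality upgrades the combinatorial face isomorphism above to the required polytopal product decomposition.
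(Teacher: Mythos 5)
Your proposal is correct and matches the paper's intent exactly: the paper states this lemma with a \qed and no written proof, presenting it as the direct translation of Lemma~\ref{facet} through the dual-graph dictionary (with the matching of truncation constants already supplied by the preceding lemma identifying $J(G,g)$ with $J(\ssS)$). Your write-up simply makes that translation explicit, including the correct identifications $J(G\sslash C)\cong J_M(\ssT)$ and $J(C)\cong J_{M-\ssT}(\ssS-\ssT)$.
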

 
 
 \begin{proposition}\label{product} For any incomplete core system $\ssT$ in $M$, $W(\ssT)$ is contractible.
 \end{proposition}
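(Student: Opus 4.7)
The plan is to realize $W(\ssT)$ as a product of a single polytope with a jewel space of a simpler manifold, so that contractibility follows immediately from contractibility of jewel spaces.

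First I would fix the incomplete core system $\ssT\subset M$ and apply Lemma~\ref{sphere_facet} to every complete core $\ssS$ containing $\ssT$ as a core complement. The key point is that the first factor $J_M(\ssT)$ in the isomorphism $\varphi(\ssS,\ssT)\iso J_M(\ssT)\times J_{M-\ssT}(\ssS-\ssT)$ does not depend on $\ssS$. One therefore expects a product decomposition
\[
W(\ssT)\iso J_M(\ssT)\times \bigcup_{\ssS\supset\ssT}J_{M-\ssT}(\ssS-\ssT),
\]
where the union is over all complete core $\ssS$ containing $\ssT$ as a core complement.

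Next I would identify the second factor. By Lemma~\ref{ccore}, as $\ssS$ ranges over complete core systems in $M$ having $\ssT$ as a core complement, the difference $\ssS-\ssT$ ranges exactly over complete core sphere systems in the cut-open manifold $M-\ssT$. Consequently the union on the right is precisely the jewel space $\mathcal J(M-\ssT)$. Cutting $M=M_{n,s}$ along the spheres of $\ssT$ produces a disjoint union of manifolds of the form $M_{n_i,s_i}$ with $\sum (n_i+\tfrac{s_i}{2})<n+s$ (the boundary spheres of $\ssT$ contribute to the new punctures), and the jewel space of a disjoint union is manifestly the product of the factor jewel spaces. Since each $\BVns[n_i,s_i]$ is a deformation retract of $\CVns[n_i,s_i]$ and is therefore contractible, $\mathcal J(M-\ssT)$ is contractible. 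The polytope $J_M(\ssT)$ is convex, hence also contractible, and the proposition follows.

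The step requiring the most care is verifying that the facet-by-facet product decompositions from Lemma~\ref{sphere_facet} glue together coherently to yield a genuine product decomposition of $W(\ssT)$. Concretely, if $\ssS$ and $\ssS'$ are two complete core extensions of $\ssT$ that share a common face, the face along which $\varphi(\ssS,\ssT)$ and $\varphi(\ssS',\ssT)$ are identified in $\BVn$ must, under the isomorphisms of Lemma~\ref{sphere_facet}, correspond to $J_M(\ssT)$ times the common face of $J_{M-\ssT}(\ssS-\ssT)$ and $J_{M-\ssT}(\ssS'-\ssT)$ in $\mathcal J(M-\ssT)$. This is essentially bookkeeping: the gluings between jewels in $\BVn$ are defined face-by-face using the poset description from the proof of Lemma~\ref{facet}, and restricting that poset map to the pieces coming from $\ssT$ and from $\ssS-\ssT$ gives precisely the gluings in $J_M(\ssT)$ and in $\mathcal J(M-\ssT)$ respectively. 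Once this compatibility is checked, the homeomorphism $W(\ssT)\iso J_M(\ssT)\times \mathcal J(M-\ssT)$ is established and contractibility follows.
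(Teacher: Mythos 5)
Your argument is essentially the paper's proof: decompose $W(\ssT)$ as $J_M(\ssT)\times\mathcal J(M-\ssT)$ using Lemma~\ref{sphere_facet} and Lemma~\ref{ccore}, then observe that both factors are contractible. One small correction: when $M-\ssT$ is disconnected with components $M^1,\ldots,M^k$, the space $\mathcal J(M-\ssT)$ is the \emph{join} $\mathcal J(M^1)\ast\cdots\ast\mathcal J(M^k)$ rather than the product --- the weights of a complete system in $M-\ssT$ sum to $1$ across all components, so a jewel on $m_1+\cdots+m_k$ spheres has dimension $m_1+\cdots+m_k-1$, matching the join and not the product --- but since a join of nonempty contractible spaces is contractible, your conclusion is unaffected.
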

 
\begin{proof}  By Lemma~\ref{sphere_facet} we have $$W(\ssT)\iso  J_M(\ssT)\times \bigcup_{_{\ssS \hbox{ \tiny complete core, }\\ \ssS\supset \ssT}} J_{M-\ssT}(\ssS-\ssT)\iso J_M(\ssT)\times \mathcal J({M-\ssT}).$$ 
$J_M(\ssT)$ is a truncated simplex, so is contractible.   If $M-\ssT$ has $k$ components $M^1\cup\ldots\cup M^k$, then  $\mathcal J(M-\ssT)\iso \mathcal J(M^1)\ast\ldots\ast \mathcal J(M^k)$.  Since each $\mathcal J(M^i)$ is a deformation retract of some $CV_{n_i,s_i}$, each is contractible.  
 \end{proof}

 \begin{theorem}  $\bdry\BVns$ is homotopy equivalent to  the subcomplex $\SC_\infty(M_{n,s})$ of the complex $\SC(M_{n,s})$of core sphere systems spanned by incomplete  systems.
 \end{theorem}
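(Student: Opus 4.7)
The plan is to apply the nerve lemma to the cover $\mathcal U=\{W(\ssT):\ssT\text{ an incomplete core sphere system in }M_{n,s}\}$ of $\bdry\BVns$. Proposition 2.10 already provides contractibility of each wall, so the two remaining tasks are to verify that all non-empty finite intersections of walls are contractible, and to identify the resulting nerve $N(\mathcal U)$ with $\SC_\infty(M_{n,s})$ up to homotopy.

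For the intersections, I will show that $W(\ssT_0)\cap\cdots\cap W(\ssT_k)$ is non-empty precisely when the $\ssT_i$ can be totally ordered by inclusion, forming a chain $\ssT_{i_0}\subsetneq\cdots\subsetneq\ssT_{i_k}$. Inside a single jewel $J(\ssS)$ containing all the $\ssT_i$, Lemma~\ref{facet} translated into sphere systems shows that the border facets $\varphi(\ssS,\ssT_j)$ have a common point iff the complementary core subgraphs $C_j=\ssS-\ssT_j$ form a chain, equivalently iff the $\ssT_j$ themselves are nested. For common points living on the interface between different jewels, the gluings are along interior facets arising from collapses of non-loop spheres in the dual graph; such collapses preserve the core-complement structure, so the across-jewel case reduces to the single-jewel case. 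Given a chain, iterating Lemma~\ref{sphere_facet} decomposes the codimension-$(k+1)$ face $\bigcap_j\varphi(\ssS,\ssT_{i_j})$ as an iterated product of relative jewels, and taking the union over complete core $\ssS\supset\ssT_{i_k}$ yields
$$W(\ssT_{i_0})\cap\cdots\cap W(\ssT_{i_k})\iso J(\ssT_{i_0})\times J(\ssT_{i_1}-\ssT_{i_0})\times\cdots\times\mathcal J(M-\ssT_{i_k}),$$
a product of truncated simplices with a jewel space of a punctured 3-manifold; each factor is contractible, the jewel-space factor by the same argument as Proposition~\ref{product}, so the intersection is contractible.

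With the chain characterization in hand, the nerve $N(\mathcal U)$ is by definition the order complex of the poset of non-empty incomplete core sphere systems in $M_{n,s}$ under inclusion. This is exactly the poset of simplices of $\SC_\infty(M_{n,s})$ ordered by inclusion, so its order complex is the barycentric subdivision of $\SC_\infty(M_{n,s})$, canonically homeomorphic to $\SC_\infty(M_{n,s})$. Combined with the nerve lemma applied to the good cover $\mathcal U$, this gives the asserted homotopy equivalence.

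The main obstacle is the across-jewel part of the intersection analysis: verifying that a common point of $W(\ssT_0)$ and $W(\ssT_1)$ witnessed by border facets in two different jewels $J(\ssS_0)$ and $J(\ssS_1)$ forces $\ssT_0$ and $\ssT_1$ already to be nested. The key is that interior-facet gluings correspond to removing non-loop spheres from a complete system, and this operation preserves both which subsystems are core complements and the inclusion relations among them; making this precise lets us rearrange the witnessing data to lie in a single common jewel, where the chain condition of Lemma~\ref{facet} controls everything.
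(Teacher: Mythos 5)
Your argument is correct in outline, but it takes a genuinely different route from the proof in the paper. The paper does not invoke the nerve lemma directly in the proof of this theorem: it defines a map $\bdry\BVns\to \SC_\infty(M_{n,s})$ by letting the truncation constants $t(\ssS,\ssT)=3^{r(\ssS,\ssT)}/N$ tend to $0$ as $N\to\infty$, observes via Proposition~\ref{product} that the preimage of each open simplex $\sigma(\ssT)$ is $(J_M(\ssT))^0\times\mathcal J(M-\ssT)$, and concludes the map is a homotopy equivalence because these preimages are contractible. Your nerve-lemma approach instead matches the strategy announced in the introduction, and it forces you to prove something the paper's written proof never spells out, namely that all finite intersections of walls are contractible and occur exactly along chains. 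Your treatment of this is right: the nesting claim reduces, via the face-poset description of Proposition 7.1 of \cite{BSV} (quoted in the proof of Lemma~\ref{facet}), to the observation that the unique minimal jewel face containing a boundary point is indexed by a forest together with a \emph{chain} of core subgraphs, and the walls through that point are exactly those labelled by the corresponding core complements $\ssT_i=\ssS-C_i$; since each $\ssT_i$ is a sphere system in $M$ independent of the ambient jewel, the across-jewel case is consistent with the single-jewel one. Your iterated-product formula for a chain intersection is the correct generalization of Lemma~\ref{sphere_facet}, and the identification of the nerve with the order complex of incomplete core systems (the barycentric subdivision of $\SC_\infty(M_{n,s})$) is right, using the remark after Lemma~\ref{ccore} that every incomplete core system is a core complement of some complete core system, so no wall is empty. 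What each approach buys: the paper's limit map lands on $\SC_\infty(M_{n,s})$ itself and is shorter, but leans on the (true, yet unjustified in the text) assertion that a simplexwise map with contractible preimages of open simplices is a homotopy equivalence; your version supplies exactly the good-cover data that makes the conclusion immediate from the nerve lemma for closed covers by subcomplexes, at the cost of the longer intersection analysis you correctly flag as the main obstacle.
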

 
 \begin{proof}   Note that any core complement $\ssT$ is necessarily incomplete, since any sphere in $M-\ssT$ for a complete system $\ssT$ is separating. Therefore letting $N\to\infty$ in the definition of the truncation constant $t(\ssS,\ssT),$ so that $t(\ssS,\ssT)\to 0,$ gives a map  
$$\bdry\BVns\to \SC_\infty(M_{n,s}).$$ By Proposition~\ref{product} the pre-image of an open simplex $\sigma(\ssT)$ in $\SC_\infty(M_{n,s})$ is homeomorphic to   $(J_M(\ssT))^0\times \mathcal J({M-\ssT})$, where $(J_M(\ssT))^0$ denotes the interior of the jewel $J_M(\ssT)$.  Since these pre-images are all contractible, the map is a homotopy equivalence.   
 \end{proof}
 
\begin{remark} The subcomplex $\SC_\infty(M_{n})$ contains the entire $(n-2)$-skeleton of $\SC(M_{n}),$ since it takes at least $n$ spheres to cut $M_{n}$ into   simply connected pieces.  Since $\SC(M_{n})$ is contractible, this shows that $\SC_\infty(M_{n})$ is at least $(n-3)$-connected.  The topology of $\SC_\infty(M_{n})$ and the analogous complex for unreduced Outer space was studied by B. Br\"uck and R. Gupta \cite{BrGu}, whose methods give another proof that  $\SC_\infty(M_{n})$ is $(n-3)$-connected, and also show that the unreduced version is $(n-2)$-connected.
\end{remark} 
It is easy to compute, using the Euler characteristic, that  $\SC_\infty(M_{n})$ is $(3n-6)$-dimensional.  This situation reflects that of the arc complex $\mathcal A$ of a surface with boundary.  The  complex $\mathcal A$ embeds naturally into $\SC(M_{n})$ for appropriate $n$ (take the product of the surface with an interval to get a handlebody, then double it;  the arcs become discs, then spheres).   For example, if the surface has  genus $g>0$ with one boundary component, the entire arc complex $\mathcal A$ is contractible of dimension $6g-4$,   the subcomplex $\mathcal A_\infty$ at infinity is $(6g-6)$-dimensional and $\mathcal A_\infty$ contains the $(2g-2)$-skeleton so is $(2g-3)$-connected.  Harer proves that $\mathcal A_\infty$ is in fact homotopy equivalent to a bouquet of $(2g-2)$-spheres \cite{Harer}, and we ask whether the same is true here:

{\bf Question:}  Is $\SC_\infty(M_{n})$   homotopy equivalent to a bouquet of spheres of dimension $n-2$?

Recall that $\Out(F_n)$ is a virtual duality group with virtual cohomological dimension $2n-3$, so the dualizing module is $H^{2n-3}(Out(F_n);\Z(Out(F_n))\iso H^{2n-3}_c(\BVn)$.  The space $\BVn$ has dimension $3n-4$, so if $\BVn$ were an orientable manifold  Poincar\'e duality would identify $H^{2n-3}_c(\BVn)$ with $H_{n-1}(\BVn,\bdry \BVn)$, which is isomorphic to $H_{n-2}(\bdry\BVn)\iso H_{n-2}(\SC_\infty(M_n))$.  But   $\BVn$ is  not a manifold, and we ask

{\bf Question:}  What (if any) is the relation between $H_{n-2}(\SC_\infty(M_{n}))$ and  $H^{2n-3}_c(\BVn)$?  

\begin{remark} For recent work on the structure of the dualizing module $H^{2n-3}_c(\BVn)$ for $\Out(F_n)$, see the  paper by R. Wade and T. Wasserman \cite{WaWa}.
\end{remark}

Both of these questions can also be asked for $\SC_{\infty}(M_{n,s})$ (where the spheres would have dimension $n-3+s$) and $\mathcal J_{n,s}$. 

\subsection{The Jacobian map}
There is a natural map, called the {\em Jacobian } map, from $CV_n$ to the symmetric space $Q_n=\SL(n,\mathbb R)/SO(n)$ of positive definite quadratic forms on $\R^n$ (see \cite{Bak}).   This map is  simple to define using the description of a point of $CV_n$ as a graph $X$  together with a {\em marking} $g$ which identifies $\pi_1(X)$ with $F_n$.  Specifically, equip the Euclidean space $\mathbb R^E$ spanned by the edges $E$ of $X$ with the standard inner product, then restrict this to the subspace $H_1(X;\R)\subset \R^E$ to get a positive definite quadratic form on $H_1(X;\R)$.  The marking $g$ also identifies $H_1(X;\R)$ with $\mathbb R^n$, so we can think of this form as a point of $Q_n$.   

{\bf Question:} What is the image of $\mathcal J_n$ under the Jacobian map?  What is the image of  $\bdry\BVn$?
In particular, does the Jacobian map send the Jewel space to the Grayson retract? 
 
The Jacobian map commutes with the actions of $\Out(F_n)$ on $CV_n$ and and $GL(n,\Z)$ on $Q_n$, and the induced map on the quotients is called the {\em tropical Torelli map} by algebraic geometers (see, e.g., \cite{Chanetc}).  The Jacobian map extends naturally to appropriate bordifications of $CV_n$ and $Q_n$, and the corresponding Torelli maps were studied recently in papers by F. Brown \cite{FBrown} and F. Brown, M. Chan, S. Galatius and S. Payne 
\cite{BCGP}, who used them in particular to detect new unstable cohomology classes for $GL(n,\Z)$ and $SL(n,\Z)$.

\end{document}